\documentclass[reqno,b5paper]{amsart}
\usepackage{amsmath}
\usepackage{amssymb}
\usepackage{amsthm}

\setlength{\textwidth}{121.9mm}
\setlength{\textheight}{176.2mm}

\newtheorem{prop}{Proposition}[section]
\newtheorem{lem}{Lemma}[section]
\newtheorem{coro}{Corollary}[section]
\theoremstyle{definition}
\newtheorem{defi}{Definition}[section]
\newtheorem{exa}{Example}[section]
\newtheorem*{rem}{Remark}

\begin{document}

\title{Tying up baric algebras}

\author[Antonio M. Oller-Marc\'{e}n]{Antonio M. Oller-Marc\'{e}n*}

\newcommand{\acr}{\newline\indent}

\address{\llap{*\,}Departamento de Matem\'{a}ticas\acr
Universidad de Zaragoza\acr
C/ Pedro Cerbuna 12, 50009\acr
Zaragoza\acr
SPAIN}
\email{oller@unizar.es} 

\subjclass[2010]{Primary 17D92; Secondary 17D99}

\keywords{Baric algebra, Indecomposable baric algebra}

\begin{abstract}
Given two baric algebras $(A_1,\omega_1)$ and $(A_2,\omega_2)$ we
describe a way to define a new baric algebra structure over the
vector space $A_1\oplus A_2$, which we shall denote $(A_1\bowtie
A_2,\omega_1\bowtie\omega_2)$. We present some easy properties of this construction and we show that in the commutative and unital case it preserves indecomposability. Algebras of the form $A_1\bowtie A_2$ in the associative, coutable-dimensional, zero-characteristic case are classified.  
\end{abstract}

\maketitle 

\section{Introduction}
Baric algebras play an important role in the theory of genetic
algebras. The use of algebraic formalism to study genetic
inheritance was introduced by I.M.H. Etherington \cite{ETH} in the
first half of the last century and has revealed fruitful giving rise
to many interesting classes of algebras such as train or Bernstein.
For a brief survey of this subject we refer to \cite{MLR} and for an
introductory but deeper approach to \cite{ANG}.

In \cite{CG} the notion of decomposable baric algebras was introduced. In the same paper it was also presented a way to construct decomposable baric algebras starting from two baric algebras with an idempotent of weight one. Furthermore in \cite{CG} and in \cite{CGb} the authors analized the indecomposability of some well-known examples of algebras arising in genetics. In this work we define a new way to construct a baric algebra starting from two given baric algebras. Our construction, although similar, is different than that in \cite{CG}. In particular, while the construction in \cite{CG} always gives rise to decomposable baric algebras, we will show that in the commutative unital case our construction preserves indecomposability.

We will also show that baric algebras obtained by our method always have a unique weight homomorphism. Thus, as a consequence, we show that every baric algebra can be embedded in a baric algebra with a unique weight homomorphism.

The paper is organized as follows. The second section presents the construction and the third one gives some properties following easily frome the definition. In the fourth section we study the uniqueness of the weight homomorphism. In the fifth section we study the ideals and
focus on the case when our original algebras are commutative and unital, showing that in this case our construction preserves indecomposability. Finally we study the associative case and give a classification when we are in countable dimension and the base field is of characteristic zero.

\section{The construction}
Let $(A_1,\omega_1)$ and $(A_2,\omega_2)$ be two baric algebras;
i.e, $A_1$ and $A_2$ are algebras over a field $K$ and
$\omega_i:A_i\longrightarrow K$ is a non-zero $K$-algebra
homomorphism for $i=1,2$. Now, in the $K$-vector space $A_1\oplus
A_2$ we define a product
\begin{equation}\label{uno}(a_1,a_2)(b_1,b_2)=(a_1b_1+\omega_2(b_2)a_1,a_2b_2+\omega_1(b_1)a_2)\end{equation}
which is easily seen to define a $K$-algebra structure on $A_1\oplus
A_2$.

\begin{defi}
Given $(A_i,\omega_i)$ with $i=1,2$ two baric algebras we define
$A_1\bowtie A_2$ to be the $K$-vector space $A_1\oplus A_2$ with the
algebra structure given by the product (\ref{uno}).
\end{defi}

We can now define an application $\omega_1\bowtie\omega_2:A_1\oplus
A_2\longrightarrow K$ given by the formula
$\omega_1\bowtie\omega_2(a_1,a_2)=\omega_1(a_1)+\omega_2(a_2)$.
Trivially $\omega_1\bowtie\omega_2$ is $K$-linear and, also, we have
that
\begin{align*}\omega_1\bowtie\omega_2((a_1,a_2)(b_1,b_2))&=\omega_1\bowtie\omega_2(a_1b_1+\omega_2(b_2)a_1,a_2b_2+\omega_1(b_1)a_2)\\&=\omega_1(a_1b_1)+\omega_2(b_2)\omega_1(a_1)+\omega_2(a_2b_2)+\omega_1(b_1)\omega_2(a_2)\\&=(\omega_1(a_1)+\omega_2(a_2))(\omega_1(b_1)+\omega_2(b_2))\\&=\omega_1\bowtie\omega_2(a_1,a_2)\omega_1\bowtie\omega_2(b_1,b_2).\end{align*}
Thus, $\omega_1\bowtie\omega_2$ is a $K$-homomorphism and we have
the following:

\begin{prop}
Let $(A_i,\omega_i)$ with $i=1,2$ be baric algebras. Then so is the
pair $(A_1\bowtie A_2,\omega_1\bowtie\omega_2)$.
\end{prop}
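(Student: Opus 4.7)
The plan is to verify the three defining conditions of a baric algebra for the pair $(A_1\bowtie A_2,\omega_1\bowtie\omega_2)$: namely that $A_1\bowtie A_2$ is a $K$-algebra, that $\omega_1\bowtie\omega_2$ is a $K$-algebra homomorphism, and that it is non-zero. Almost everything is already laid out in the paragraph preceding the statement, so the proof amounts to collecting those computations and supplying the one verification that was left implicit.

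First I would confirm the $K$-algebra structure on $A_1\oplus A_2$ given by (\ref{uno}). Writing out $(\lambda a_1+a_1',\lambda a_2+a_2')(b_1,b_2)$ and $(a_1,a_2)(\lambda b_1+b_1',\lambda b_2+b_2')$, $K$-bilinearity reduces coordinate-by-coordinate to the $K$-bilinearity of the products on $A_1$ and $A_2$ together with the $K$-linearity of $\omega_1$ and $\omega_2$. This is precisely what the text claims is ``easily seen'' and is the only piece not fully displayed in the excerpt.

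Next I would observe that $\omega_1\bowtie\omega_2$ is $K$-linear directly from its defining formula $\omega_1\bowtie\omega_2(a_1,a_2)=\omega_1(a_1)+\omega_2(a_2)$, which is a sum of $K$-linear maps precomposed with the canonical projections. Multiplicativity is the chain of equalities already displayed before the statement, so I would simply cite that computation rather than repeat it.

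Finally I would check that $\omega_1\bowtie\omega_2$ is non-zero. Since $\omega_1$ is a weight on $A_1$, choose $a_1\in A_1$ with $\omega_1(a_1)\neq 0$; then $\omega_1\bowtie\omega_2(a_1,0)=\omega_1(a_1)\neq 0$. There is no substantive obstacle in this proposition: it packages routine verifications, and the main one worth writing out is the bilinearity of~(\ref{uno}) that the author leaves to the reader.
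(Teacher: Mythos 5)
Your proposal is correct and follows essentially the same route as the paper, which establishes the proposition via the displayed multiplicativity computation for $\omega_1\bowtie\omega_2$ together with the (unwritten) bilinearity check for the product~(\ref{uno}); you simply make explicit the two verifications the paper leaves to the reader, namely bilinearity and the non-vanishing of $\omega_1\bowtie\omega_2$. No gaps.
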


\begin{rem}
By means of the inclusions $\iota_i:A_i\hookrightarrow A_1\bowtie
A_2$ ($i=1,2$) given by $\iota_i(a_i)=(\delta_i^1a_i,\delta_i^2a_i)$
we can see each $A_i$ as a subalgebra of $A_1\bowtie A_2$ and we
will identify $A_i$ with $\iota_i(A_i)$. With this identification it
is easy to see that $A_i\unlhd_r A_1\bowtie A_2$.
\end{rem}

\begin{exa}
Let $K$ be a field. Obviously $(K,\textrm{id}_K)$ is a baric
algebra, then $K\bowtie K$ is the vector space $K^2$ endowed with
the product
$$(\alpha,\beta)(\alpha',\beta')=(\alpha'+\beta')(\alpha,\beta).$$
In this case we have
$$\textrm{id}_K\bowtie\textrm{id}_K(\alpha,\beta)=\alpha+\beta.$$ We
will come back to this example later on.
\end{exa}

\section{Some easy properties}
This section is devoted to present some properties arising easily from the previous construction.

We recall that two baric algebras $(A,\omega)$ and $(B,\varphi)$ are
said to be isomorphic if there exists a $K$-algebra isomorphism
$f:A\longrightarrow B$ such that $\varphi\circ f=\omega$. The
following propositions show some nice properties of this
construction.

\begin{prop}
Let $(A_i,\omega_i)$ with $i=1,2,3$ be baric algebras. Then we have
the following isomorphisms:
\begin{itemize}
\item[(i)] $(A_1\bowtie A_2,\omega_1\bowtie\omega_2)\cong(A_2\bowtie
A_1,\omega_2\bowtie\omega_1)$.
\item[(ii)] $((A_1\bowtie A_2)\bowtie
A_3,(\omega_1\bowtie\omega_2)\bowtie\omega_3)\cong(A_1\bowtie(A_2\bowtie
A_3),\omega_1\bowtie(\omega_2\bowtie\omega_3))$.
\end{itemize}
\end{prop}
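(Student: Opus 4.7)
For both isomorphisms the strategy is the same: write down the obvious candidate map on underlying vector spaces and check that (a) it respects the twisted product~(\ref{uno}) and (b) it intertwines the two weight homomorphisms. Since the underlying vector spaces of both sides are canonically identified (with $A_1\oplus A_2$ in case (i) and with $A_1\oplus A_2\oplus A_3$ in case (ii)), no nontrivial linear algebra is involved; the content is purely a calculation with~(\ref{uno}).

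For (i) the plan is to take $f\colon A_1\bowtie A_2 \longrightarrow A_2\bowtie A_1$ defined by $f(a_1,a_2)=(a_2,a_1)$. This is clearly a $K$-linear isomorphism, and $\omega_2\bowtie\omega_1\bigl(f(a_1,a_2)\bigr)=\omega_2(a_2)+\omega_1(a_1)=\omega_1\bowtie\omega_2(a_1,a_2)$, so the weight is preserved. Multiplicativity reduces to observing that (\ref{uno}) is symmetric under the simultaneous interchange $A_1\leftrightarrow A_2$, $\omega_1\leftrightarrow\omega_2$: expanding $f\bigl((a_1,a_2)(b_1,b_2)\bigr)$ and $f(a_1,a_2)\,f(b_1,b_2)$ using~(\ref{uno}) in each algebra yields the same pair.

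For (ii) the candidate is the natural rebracketing map $g\colon ((a_1,a_2),a_3)\longmapsto (a_1,(a_2,a_3))$, again obviously a linear isomorphism. The weight is preserved since both $(\omega_1\bowtie\omega_2)\bowtie\omega_3$ and $\omega_1\bowtie(\omega_2\bowtie\omega_3)$ evaluate to $\omega_1(a_1)+\omega_2(a_2)+\omega_3(a_3)$. For multiplicativity one expands the product $((a_1,a_2),a_3)\cdot((b_1,b_2),b_3)$ on the left-hand side by applying~(\ref{uno}) twice and similarly on the right; in both cases the result has three components of the form
\begin{align*}
 &a_1b_1 + \bigl(\omega_2(b_2)+\omega_3(b_3)\bigr)a_1,\\
 &a_2b_2 + \bigl(\omega_1(b_1)+\omega_3(b_3)\bigr)a_2,\\
 &a_3b_3 + \bigl(\omega_1(b_1)+\omega_2(b_2)\bigr)a_3,
\end{align*}
differing only in how they are bracketed, which is precisely what $g$ reconciles.

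The only real step is the two expansions in case (ii); the calculations are routine but one must keep track of which $\omega_i$ acts on which factor, in particular of the term $(\omega_1\bowtie\omega_2)(b_1,b_2)=\omega_1(b_1)+\omega_2(b_2)$ which is the mechanism that produces the symmetric triple above. There is no conceptual obstacle, so the proof is essentially a bookkeeping verification.
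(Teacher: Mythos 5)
Your proposal is correct and uses exactly the same maps as the paper ($f(a_1,a_2)=(a_2,a_1)$ and the rebracketing $((a_1,a_2),a_3)\mapsto(a_1,(a_2,a_3))$); the paper simply asserts that these are weight-preserving isomorphisms, while you carry out the verification, and your expansion of the triple product is accurate.
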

\begin{proof}
Define $f_1:A_1\bowtie A_2\longrightarrow A_2\bowtie A_1$ by
$f(a_1,a_2)=(a_2,a_1)$, in the same way, define $f_2:(A_1\bowtie
A_2)\bowtie A_3\longrightarrow A_1\bowtie(A_2\bowtie A_3)$ by
$f_2((a_1,a_2),a_3)=(a_1,(a_2,a_3))$. It is easy to see that both
maps are weight-preserving $K$-isomorphisms.
\end{proof}

\begin{prop}
Let $(A_1,\omega_1)$, $(A'_1,\omega'_1)$ and $(A_2,\omega_2)$ be
baric algebras and let us suppose that
$(A_1,\omega_1)\cong(A'_1,\omega'_1)$, then $(A_1\bowtie
A_2,\omega_1\bowtie\omega_2)\cong(A'_1\bowtie
A_2,\omega'_1\bowtie\omega_2)$.
\end{prop}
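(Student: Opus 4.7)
The plan is to transport the hypothesised isomorphism on the first factor directly to the $\bowtie$-products, leaving the second factor untouched. Concretely, let $g:A_1\longrightarrow A'_1$ be a $K$-algebra isomorphism witnessing $(A_1,\omega_1)\cong(A'_1,\omega'_1)$, so that $\omega'_1\circ g=\omega_1$. I would define
\[
F:A_1\bowtie A_2\longrightarrow A'_1\bowtie A_2,\qquad F(a_1,a_2)=(g(a_1),a_2),
\]
and verify that $F$ is a weight-preserving $K$-algebra isomorphism.

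Linearity and bijectivity of $F$ are immediate from the corresponding properties of $g$ together with the direct-sum structure of the underlying vector spaces. Weight preservation is also immediate: applying $\omega'_1\bowtie\omega_2$ to $F(a_1,a_2)=(g(a_1),a_2)$ gives $\omega'_1(g(a_1))+\omega_2(a_2)=\omega_1(a_1)+\omega_2(a_2)=(\omega_1\bowtie\omega_2)(a_1,a_2)$, using $\omega'_1\circ g=\omega_1$.

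The one step that actually uses the baric hypothesis, and which I would write out carefully, is multiplicativity. Expanding $F((a_1,a_2)(b_1,b_2))$ with the formula (\ref{uno}) and pushing $g$ through a product in $A_1$ yields
\[
\bigl(g(a_1)g(b_1)+\omega_2(b_2)g(a_1),\; a_2b_2+\omega_1(b_1)a_2\bigr),
\]
whereas $F(a_1,a_2)F(b_1,b_2)=(g(a_1),a_2)(g(b_1),b_2)$ equals
\[
\bigl(g(a_1)g(b_1)+\omega_2(b_2)g(a_1),\; a_2b_2+\omega'_1(g(b_1))a_2\bigr).
\]
Equality of the two right-hand components is exactly the weight-compatibility $\omega'_1\circ g=\omega_1$, so the only real content of the proof is the observation that this single identity simultaneously guarantees both multiplicativity and weight preservation of $F$. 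There is no genuine obstacle; the proof is a direct verification, and I would keep it at a few lines.
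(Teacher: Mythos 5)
Your proof is correct and uses exactly the same map as the paper: $F(a_1,a_2)=(g(a_1),a_2)$ with $g$ the given weight-preserving isomorphism. In fact you are slightly more careful than the paper, which dismisses multiplicativity as obvious, whereas you correctly note that the second component of the product requires the identity $\omega'_1\circ g=\omega_1$ and not just that $g$ is an algebra map.
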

\begin{proof}
We know by hypothesis that there exists an isomorphism
$f:A_1\longrightarrow A'_1$ such that $\omega'_1\circ f=\omega_1$.
We can define a map $\widetilde{f}:A_1\bowtie A_2\longrightarrow
A'_1\bowtie A_2$ in a natural way by
$\widetilde{f}(a_1,a_2)=(f(a_1),a_2)$. This map is obviously a
$K$-homomorphism and, moreover,
$\omega'_1\bowtie\omega_2(\widetilde{f}(a_1,a_2))=\omega'_1(f(a_1))+\omega_2(a_2)=\omega_1(a_1)+\omega_2(a_2)=\omega_1\bowtie\omega_2(a_1,a_2)$
and this completes the proof.
\end{proof}

Given a $K$-algebra $A$ and elements $x,y,z\in A$, the definitions
of the commutator $[x,y]=xy-yx$ and of the associator
$(x,y,z)=(xy)z-x(yz)$ are well known; $A$ being commutative or
associative if and only if $[x,y]=0$ for all $x,y\in A$ or
$(x,y,z)=0$ for all $x,y,z\in A$ respectively.

\begin{lem}
Let $(A_i,\omega_i)$ for $i=1,2$ be baric algebras. Let
$x=(a_1,a_2)$ and $y=(b_1,b_2)$ be elements of $A_1\bowtie A_2$.
Then we have that:
$$[x,y]=([a_1,b_1]+\omega_2(b_2)a_1-\omega_2(a_2)b_1,[a_2,b_2]+\omega_1(b_1)a_2-\omega_1(a_1)b_2).$$
\end{lem}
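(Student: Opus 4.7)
The plan is to prove this by direct computation from the definition of the product in (\ref{uno}). There is no real conceptual difficulty; the only thing to watch carefully is the asymmetry of the formula, which mixes the weight $\omega_2$ into the first component and $\omega_1$ into the second.

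First I would write down $xy$ straight from (\ref{uno}):
\[xy=(a_1b_1+\omega_2(b_2)a_1,\,a_2b_2+\omega_1(b_1)a_2).\]
Then, swapping the roles of $x$ and $y$, I would obtain
\[yx=(b_1a_1+\omega_2(a_2)b_1,\,b_2a_2+\omega_1(a_1)b_2).\]

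Next I would subtract these two elements of $A_1\oplus A_2$ componentwise. In the first coordinate the purely algebraic piece $a_1b_1-b_1a_1$ assembles into the commutator $[a_1,b_1]$ in $A_1$, while the remaining scalar-multiple terms give $\omega_2(b_2)a_1-\omega_2(a_2)b_1$. Symmetrically, the second coordinate yields $[a_2,b_2]+\omega_1(b_1)a_2-\omega_1(a_1)b_2$. Collecting these two coordinates produces exactly the formula in the statement.

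Since the computation is entirely mechanical, no step stands out as an obstacle; the only place one could slip is in accidentally pairing $\omega_1$ with the first component rather than the second, so I would double-check that the $\omega_j$ appearing in the $i$-th coordinate of $[x,y]$ is always $\omega_j$ with $j\neq i$, which matches the original product rule.
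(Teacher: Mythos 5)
Your computation is correct and is exactly the routine verification the paper has in mind; the paper simply omits the proof of this lemma as immediate from the definition of the product in (\ref{uno}). Subtracting $yx$ from $xy$ componentwise gives precisely the stated formula, so nothing is missing.
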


Recall that the commutative center of an algebra $A$ is the set
$$\mathcal{K}(A)=\{a\in A\ |\ [a,b]=0\ \forall b\in A\}$$
In view of the previous lemma, we have the following corollary.

\begin{coro}
If $(A_i,\omega_i)$ with $i=1,2$ are baric algebras, then
$\mathcal{K}(A_1\bowtie A_2)=0$
\end{coro}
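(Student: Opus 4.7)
The plan is to show that any element $x=(a_1,a_2)$ lying in $\mathcal{K}(A_1\bowtie A_2)$ must have both components zero, by exploiting the explicit formula for $[x,y]$ from the preceding lemma together with the fact that each weight homomorphism $\omega_i$ is, by the standing definition of a baric algebra, non-zero.

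First I would take an arbitrary $x=(a_1,a_2)$ in $\mathcal{K}(A_1\bowtie A_2)$ and test it against $y=(0,b_2)$ for $b_2\in A_2$. Substituting $b_1=0$ into the formula from the lemma, the commutator $[x,y]$ reduces to
$$[x,y]=\bigl(\omega_2(b_2)a_1,\;[a_2,b_2]-\omega_1(a_1)b_2\bigr).$$
The first component must vanish for every $b_2$, so $\omega_2(b_2)a_1=0$ for all $b_2\in A_2$. Choosing $b_2$ with $\omega_2(b_2)\neq 0$ (such a $b_2$ exists because $\omega_2$ is non-zero) forces $a_1=0$.

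By an entirely symmetric argument, testing $x$ against $y=(b_1,0)$ and looking at the second component gives $\omega_1(b_1)a_2=0$ for all $b_1\in A_1$, and picking $b_1$ with $\omega_1(b_1)\neq 0$ yields $a_2=0$. Hence $x=(0,0)$, which proves $\mathcal{K}(A_1\bowtie A_2)=0$.

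There is really no obstacle here: the content is concentrated in the lemma, and the only ingredient needed beyond it is the defining non-vanishing of the weight maps. The one point to watch is that one must test against elements with a zero in one slot so that the extraneous commutator $[a_i,b_i]$ disappears, isolating a purely scalar multiple of $a_1$ or $a_2$ from which one can read off the conclusion.
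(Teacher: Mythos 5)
Your proof is correct and follows exactly the route the paper intends: the corollary is stated as an immediate consequence of the commutator formula in the preceding lemma, and your specializations $y=(0,b_2)$ and $y=(b_1,0)$, combined with the non-vanishing of each $\omega_i$, are precisely the details the paper leaves to the reader. Nothing to object to.
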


As usual it is interesting to search for idempotents. In the case of
baric algebras we look for idempotents of weight 1. In this
direction we have the following easy result.

\begin{prop}
Let $(A_i,\omega_i)$ for $i=1,2$ be baric $K$-algebras and let
$e_i\in A_i$ be idempotents such that $\omega_i(e_i)=1$. Consider
the set $\mathfrak{I}=\{(\lambda e_1,\mu e_2)\ |\ \lambda+\mu=1\}$,
then $ef=e$ for all $e,f\in\mathfrak{I}$ and, in particular,
$\mathfrak{I}$ consists of idempotents of weight 1.
\end{prop}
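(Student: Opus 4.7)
The plan is a direct computation: take two generic elements $e=(\lambda e_1,\mu e_2)$ and $f=(\lambda' e_1,\mu' e_2)$ of $\mathfrak{I}$, with $\lambda+\mu=\lambda'+\mu'=1$, and expand $ef$ using the defining product (\ref{uno}). The only facts needed beyond linearity are $e_i^2=e_i$ and $\omega_i(e_i)=1$ for $i=1,2$.

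More concretely, I would first write
\[
ef=\bigl(\lambda\lambda' e_1^2+\omega_2(\mu' e_2)\lambda e_1,\ \mu\mu' e_2^2+\omega_1(\lambda' e_1)\mu e_2\bigr),
\]
then substitute $e_i^2=e_i$ and pull the scalars $\omega_1(\lambda' e_1)=\lambda'$, $\omega_2(\mu' e_2)=\mu'$ out of the homomorphisms. The first coordinate collapses to $(\lambda\lambda'+\lambda\mu')e_1=\lambda(\lambda'+\mu')e_1=\lambda e_1$ because $\lambda'+\mu'=1$; by the symmetric computation, the second coordinate collapses to $\mu e_2$. Hence $ef=(\lambda e_1,\mu e_2)=e$.

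The idempotence claim is then the special case $f=e$, and the weight-one assertion follows from
\[
\omega_1\bowtie\omega_2(\lambda e_1,\mu e_2)=\lambda\omega_1(e_1)+\mu\omega_2(e_2)=\lambda+\mu=1.
\]
There is no real obstacle here; the only point to be careful about is that the non-symmetric appearance of the product formula (the $\omega_i$ is evaluated on the right factor) forces one to put the weights of $f$ into the computation, which is precisely why the condition $\lambda'+\mu'=1$ on the right-hand factor is what makes the cancellation work.
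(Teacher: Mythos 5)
Your computation is correct and is exactly the paper's proof: the paper likewise expands the product to $(\lambda_1(\lambda_2+\mu_2)e_1,\mu_1(\lambda_2+\mu_2)e_2)$ and lets the condition $\lambda_2+\mu_2=1$ do the work. Your write-up just spells out the intermediate steps (idempotence of $e_i$, pulling scalars out of $\omega_i$) that the paper leaves implicit.
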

\begin{proof}
$(\lambda_1 e_1,\mu_1 e_2)(\lambda_2 e_1,\mu_2
e_2)=(\lambda_1(\lambda_2+\mu_2)e_1,\mu_1(\lambda_2+\mu_2)e_2)$
\end{proof}

\section{Uniqueness of the weight homomorphism}
In a baric algebra, the weight homomorphism is not uniquely
determined in general (see \cite{LSR} for example). Nevertheless,
the following result shows that our construction behaves quite
nicely in this sense.

\begin{prop}
Let $(A_i,\omega_i)$ for $i=1,2$ be baric algebras. Then
$\omega_1\bowtie\omega_2$ is uniquely determined.
\end{prop}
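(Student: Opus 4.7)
The plan is to show that any weight homomorphism $\varphi:A_1\bowtie A_2\to K$ must equal $\omega_1\bowtie\omega_2$. Since $\varphi$ is $K$-linear, it decomposes as $\varphi(a_1,a_2)=\varphi_1(a_1)+\varphi_2(a_2)$, where $\varphi_i:=\varphi\circ\iota_i:A_i\to K$. The task is then to pin down the $\varphi_i$ by exploiting the formula (\ref{uno}) on well-chosen mixed products.

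The key observation is to compute $(a_1,0)(0,b_2)$ and $(0,b_2)(a_1,0)$ using (\ref{uno}). One immediately gets $(a_1,0)(0,b_2)=(\omega_2(b_2)a_1,0)$ and $(0,b_2)(a_1,0)=(0,\omega_1(a_1)b_2)$. Applying the multiplicativity of $\varphi$ to these two identities yields, for every $a_1\in A_1$ and $b_2\in A_2$, the relations
\begin{align*}
\varphi_1(a_1)\varphi_2(b_2)&=\omega_2(b_2)\,\varphi_1(a_1),\\
\varphi_2(b_2)\varphi_1(a_1)&=\omega_1(a_1)\,\varphi_2(b_2).
\end{align*}
If $\varphi_1\not\equiv 0$ and $\varphi_2\not\equiv 0$, picking elements where each is nonzero and cancelling immediately forces $\varphi_2=\omega_2$ and $\varphi_1=\omega_1$, giving $\varphi=\omega_1\bowtie\omega_2$.

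The main obstacle is therefore to rule out the degenerate cases in which one of the $\varphi_i$ is identically zero. By symmetry, suppose $\varphi_1\equiv 0$, so $\varphi(a_1,a_2)=\varphi_2(a_2)$. The idea is to use another mixed product, namely $(0,a_2)(b_1,0)=(0,\omega_1(b_1)a_2)$: applying $\varphi$ gives $\omega_1(b_1)\varphi_2(a_2)=\varphi_2(a_2)\cdot\varphi_1(b_1)=0$ for all $a_2,b_1$. Since $\omega_1$ is assumed non-zero, one can choose $b_1$ with $\omega_1(b_1)\neq 0$, forcing $\varphi_2\equiv 0$ and hence $\varphi\equiv 0$, contradicting the fact that $\varphi$ is a weight homomorphism. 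The symmetric argument discards the case $\varphi_2\equiv 0$, and the proof is complete.
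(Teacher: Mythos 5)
Your proof is correct and follows essentially the same route as the paper: decompose $\varphi$ as $\varphi_1+\varphi_2$ via the two inclusions and exploit multiplicativity of $\varphi$ on products to force $\varphi_i=\omega_i$; your relations are exactly the paper's identity specialized to the mixed products $(a_1,0)(0,b_2)$ and $(0,b_2)(a_1,0)$. If anything, your explicit treatment of the degenerate cases $\varphi_i\equiv 0$ is slightly more careful than the paper's choice of $a_2\notin\mathrm{Ker}\ \varphi_2$, which tacitly assumes $\varphi_2\neq 0$.
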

\begin{proof}
Let us suppose that $\varphi:A_1\bowtie A_2\longrightarrow K$ is a
non-trivial homomorphism of $K$-algebras. Then for $i=1,2$ we can
define $\varphi_i:A_i\longrightarrow K$ by
$\varphi_1(a_1)=\varphi(a_1,0)$ and $\varphi_2(a_2)=\varphi(0,a_2)$.
It is easy to check that both $\varphi_i$ are $K$-homomorphisms and
$\varphi(a_1,a_2)=\varphi_1(a_1)+\varphi_2(a_2)$.

Now, as $\varphi$ is a $K$-homomorphism,
$\varphi(a_1,a_2)\varphi(b_1,b_2)=\varphi\big(
(a_1,a_2)(b_1,b_2)\big)$ and thus, by the preceding considerations:
$$(\varphi_1(a_1)+\varphi_2(a_2))(\varphi_1(b_1)+\varphi_2(b_2))=\varphi(a_1b_1+\omega_2(b_2)a_1,a_2b_2+\omega_1(b_1)a_2)$$
From this it follows that:
$$\varphi_1(a_1)(\omega_2(b_2)-\varphi_2(b_2))=\varphi_2(a_2)(\varphi_1(b_1)-\omega_1(b_1)),\
\forall a_i,b_i\in A_i$$ So choosing $a_1\in\textrm{Ker}\ \varphi_1$
and $a_2\notin\textrm{Ker}\ \varphi_2$ we have that
$\varphi_1=\omega_1$. Similarly we obtain $\varphi_2=\omega_2$ and
the proof is complete.
\end{proof}

As a consequence of this result, together with the fact that $A_i$ is a subalgebra of $A_1\bowtie A_2$ we have the following corollary.

\begin{coro}
Every baric algebra is a subalgebra of a baric algebra with a unique weight homomorphism.
\end{coro}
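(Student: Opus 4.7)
The plan is to exhibit, for any given baric algebra $(A,\omega)$, an explicit baric algebra containing it and having a unique weight homomorphism, by invoking the construction $\bowtie$ with a trivial second factor.

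First I would take $(A,\omega)$ to be the given baric algebra and choose as a second factor the simplest possible baric algebra, namely $(K,\textrm{id}_K)$, and form $(A\bowtie K,\omega\bowtie\textrm{id}_K)$. The previous proposition immediately tells us that $\omega\bowtie\textrm{id}_K$ is the unique non-trivial $K$-algebra homomorphism $A\bowtie K\to K$, so this algebra has a unique weight homomorphism.

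Next I would verify that $(A,\omega)$ sits inside $(A\bowtie K,\omega\bowtie\textrm{id}_K)$ as a baric subalgebra. The inclusion $\iota_1\colon A\hookrightarrow A\bowtie K$ given by $\iota_1(a)=(a,0)$ was already observed in the remark to be an injective $K$-algebra homomorphism identifying $A$ with a subalgebra of $A\bowtie K$. To promote this to an embedding of baric algebras it only remains to check weight compatibility, and indeed
\[
(\omega\bowtie\textrm{id}_K)(\iota_1(a)) \;=\; (\omega\bowtie\textrm{id}_K)(a,0) \;=\; \omega(a)+0 \;=\; \omega(a),
\]
so the weight restricts correctly.

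There is essentially no obstacle here: the content of the statement has already been assembled in the preceding proposition (uniqueness of the weight homomorphism on $A_1\bowtie A_2$) and in the remark (each factor embeds as a subalgebra). The only minor point worth spelling out is choosing a second factor at all, for which $K$ is the most economical option, though any baric algebra $(B,\omega_B)$ would work equally well.
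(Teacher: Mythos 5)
Your proof is correct and follows essentially the same route as the paper, which states the corollary as an immediate consequence of Proposition 4.1 together with the remark that each factor $A_i$ embeds as a (weight-compatible) subalgebra of $A_1\bowtie A_2$. Your only addition is making the choice of second factor explicit as $(K,\textrm{id}_K)$, which is a harmless and natural specialization.
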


In \cite{ANG} it is shown that if a baric algebra $(A,\omega)$ is such that $\textrm{Ker}\ \omega$ is nil, then the weight homomorphism is uniquely determined. Clearly our construction provides a family of examples showing that the converse is false.

\section{Ideals and indecomposability.}
Let $(A_i,\omega_i)$ with $i=1,2$ be baric algebras, then each $A_i$
can be seen as a subalgebra of $A_1\bowtie A_2$. Now let $I\unlhd_r
A_1$ be a right ideal. We can identify $I$ with $\iota_1(I)$ and it
is easy to see that with this identification $I\unlhd_r A_1\bowtie
A_2$ remains a right ideal.

Now let $I\unlhd_r A_1\bowtie A_2$ be a right ideal. Then we can
define
$$I_1=\{a_1\in A_1\ |\ \exists\ a_2\in A_2\ \textrm{s.t.}\
(a_1,a_2)\in I\}$$ Again, it is easy to see that $I_1\unlhd_r A_1$
is also a right ideal. Note that if we define the projections
$p_i:A_1\bowtie A_2\longrightarrow A_i$ in the obvious way, $I_1$ is
just $p_1(I)$. In the same way we can define $I_2$.

In view of the previous considerations, it is natural to ask whether
an ideal of $A_i$ remains an ideal of $A_1\bowtie A_2$.

\begin{prop}
Let $I\unlhd A_i$ be an ideal. Then $I$ is an ideal of $A_1\bowtie
A_2$ if and only if $I\subseteq \textrm{Ker}\ \omega_i$.
\end{prop}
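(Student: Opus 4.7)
The plan is to use the symmetry between the two factors (via Proposition 3.1(i)) to reduce to the case $i=1$, identify $I$ with $\iota_1(I)=\{(x,0) : x\in I\}$, and then separately test right and left multiplication against an arbitrary element $(b_1,b_2)\in A_1\bowtie A_2$ using the defining formula (\ref{uno}). The point is that the formula is asymmetric between the two slots, so one direction of stability will come for free and the other will yield the condition $I\subseteq\textrm{Ker}\ \omega_1$.

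First I would compute the right product
$(x,0)(b_1,b_2)=(xb_1+\omega_2(b_2)x,\,0)$. The second coordinate vanishes automatically, and the first coordinate lies in $I$ because $I$ is a right ideal and a $K$-subspace of $A_1$. So right stability of $\iota_1(I)$ holds with no hypothesis on $I$ beyond its being an ideal of $A_1$.

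Next I would compute the left product
$(b_1,b_2)(x,0)=(b_1x,\,\omega_1(x)b_2)$. The first coordinate again lies in $I$ since $I$ is a left ideal; the crucial term is the second coordinate $\omega_1(x)b_2$, which must belong to the second slot of $\iota_1(I)$, namely $0$. So I need $\omega_1(x)b_2=0$ for every $b_2\in A_2$. Since $A_2\neq 0$ (a baric algebra is nonzero because it admits a nonzero homomorphism to $K$), this condition is equivalent to $\omega_1(x)=0$, i.e. $I\subseteq\textrm{Ker}\ \omega_1$.

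Conversely, if $I\subseteq\textrm{Ker}\ \omega_1$ then $\omega_1(x)b_2=0$ in the left-product computation and both stability checks go through, giving that $\iota_1(I)\unlhd A_1\bowtie A_2$. The case $i=2$ follows by the isomorphism $A_1\bowtie A_2\cong A_2\bowtie A_1$ of Proposition 3.1(i). There is no serious obstacle in the argument; the only genuine step is recognizing that the asymmetry in (\ref{uno}) funnels the entire content of the proposition into the left-multiplication check, and using $A_2\neq 0$ to pass from $\omega_1(x)A_2=0$ to $\omega_1(x)=0$.
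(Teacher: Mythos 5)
Your proof is correct and follows essentially the same route as the paper: right-stability is automatic, and the left-multiplication computation $(b_1,b_2)(x,0)=(b_1x,\omega_1(x)b_2)$ forces $\omega_1(x)=0$ since $A_2\neq 0$. The only cosmetic differences are that you invoke Proposition 3.1(i) for the case $i=2$ where the paper just says ``analogous,'' and you spell out the right-ideal check that the paper dismisses as clear.
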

\begin{proof}
If $I\unlhd A_1$ (the case $I\unlhd A_2$ is analogous), then clearly
$I\unlhd_r A_1\bowtie A_2$. Now if $x\in I$ and $a_i\in A_i$ for
$i=1,2$ we have that $(a_1,a_2)(x,0)=(a_1x,\omega_1(x)a_2)\in I$ if
and only if $\omega_1(x)a_2=0$ for all $a_2\in A_2$. Obviously this
happens if and only if $\omega_1(x)=0$ and the proof is complete.
\end{proof}

While, on the other hand, we have the following:

\begin{prop}
Let $I\unlhd A_1\bowtie A_2$ be an ideal such that $I_1\neq A_1$.
Then $I_1$ is an ideal of $A_1$ if and only if $I_2\subseteq
\textrm{Ker}\ \omega_2$.
\end{prop}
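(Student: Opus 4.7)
The plan is to exploit the fact already established in the section that $I_1 = p_1(I)$ is always a right ideal of $A_1$, so the content of the proposition is that $I_1$ is closed under \emph{left} multiplication by $A_1$ precisely when $I_2 \subseteq \textrm{Ker}\ \omega_2$. Both directions will fall out of a single computation: pick $a_1 \in I_1$ together with some $a_2 \in A_2$ witnessing $(a_1,a_2) \in I$, and multiply on the left by $(b_1,0)$ for an arbitrary $b_1 \in A_1$. Using formula (\ref{uno}),
\[
(b_1,0)(a_1,a_2) = \bigl(b_1 a_1 + \omega_2(a_2)\, b_1,\ 0\bigr) \in I,
\]
so projecting to the first coordinate yields $b_1 a_1 + \omega_2(a_2)\, b_1 \in I_1$ for every $b_1 \in A_1$.

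For the $(\Leftarrow)$ direction, if $I_2 \subseteq \textrm{Ker}\ \omega_2$ then any $a_2$ arising as above lies in $I_2$, so $\omega_2(a_2)=0$ and the displayed element reduces to $b_1 a_1 \in I_1$. Combined with the known right-ideal property, this gives $I_1 \unlhd A_1$.

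For the $(\Rightarrow)$ direction, assume $I_1$ is an ideal of $A_1$. Then $b_1 a_1 \in I_1$, and subtracting from the displayed element we obtain $\omega_2(a_2)\, b_1 \in I_1$ for all $b_1 \in A_1$. This is where the hypothesis $I_1 \neq A_1$ is needed: if there existed some $a_2 \in I_2$ with $\omega_2(a_2) \neq 0$, we could divide by this scalar and deduce $A_1 \subseteq I_1$, a contradiction. Hence $\omega_2(a_2) = 0$ for every $a_2 \in I_2$, i.e.\ $I_2 \subseteq \textrm{Ker}\ \omega_2$.

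No genuine obstacle is anticipated; the only point that requires a moment's care is remembering that the elements $a_2$ produced when we start from $a_1 \in I_1$ range, as $a_1$ varies, over precisely all of $I_2$ (up to the fibering of $p_1$), and that the forward direction genuinely uses $I_1 \neq A_1$ to rule out the case $\omega_2(a_2) \neq 0$.
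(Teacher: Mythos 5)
Your proposal is correct and follows essentially the same route as the paper: both prove the backward direction by computing $(b_1,0)(a_1,a_2)=(b_1a_1+\omega_2(a_2)b_1,0)\in I$ with $\omega_2(a_2)=0$, and both prove the forward direction by supposing some $a_2\in I_2$ has $\omega_2(a_2)\neq 0$, subtracting $b_1a_1\in I_1$ to get $\omega_2(a_2)b_1\in I_1$ for all $b_1$, and contradicting $I_1\neq A_1$. Your closing remark about needing to start from an arbitrary $a_2\in I_2$ (rather than an arbitrary $a_1\in I_1$) in the forward direction is exactly the care the paper also takes.
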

\begin{proof}
Given $I\unlhd A_1\bowtie A_2$ we already know that $I_1\unlhd_r
A_1$ is a right ideal. Let us suppose that $I_2\subseteq
\textrm{Ker}\ \omega_2$, then if $a_1\in I_1$ and $a\in A_1$, there
exists $a_2\in A_2$ such that $(a_1,a_2)\in I$; so we have that
$(a,0)(a_1,a_2)=(aa_1,0)\in I$ and this implies that $aa_1\in I_1$
as desired.

Conversely, suppose that there exists $a_2\in I_2$ such that
$\omega_2(a_2)\neq 0$. By definition, there exists $a_1\in A_1$ such
that $(a_1,a_2)\in I$; in particular $a_1\in I_1$ so given any $a\in
A_1$ we have that $aa_1\in I_1$. Moreover,
$(a,0)(a_1,a_2)=(aa_1+\omega_2(a_2)a,0)\in I$ so
$aa_1+\omega_2(a_2)a\in I_1$. Then we have that $\omega_2(a_2)a\in
I_1$ and that $a\in I_1$. This implies $A_1=I_1$, a contradiction.
\end{proof}

\begin{rem}
If $I\unlhd A_1\bowtie A_2$ is an ideal, $I\neq A_1\bowtie A_2$ does
not imply $I_1\neq A_1$. To see this it is enough to consider the
ideal $I=\textrm{Ker}\ \omega_1\bowtie\omega_2$, in this case we
have that $I_1=A_1$ although $I\neq A_1\bowtie A_2$.
\end{rem}

\begin{prop}
Let $(A_i,\omega_i)$ for $i=1,2$ be commutative baric algebras and
let $I\unlhd A_1\bowtie A_2$ be an ideal such that
$I\subseteq\textrm{Ker}\ \omega_1\bowtie\omega_2$. Then $I_1=A_1$ if
and only if $I=\textrm{Ker}\ \omega_1\bowtie\omega_2$.
\end{prop}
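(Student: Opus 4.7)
The backward implication is immediate from the preceding Remark: whenever $I=\ker(\omega_1\bowtie\omega_2)$, given any $a_1\in A_1$ one uses surjectivity of $\omega_2$ (it is a nonzero homomorphism into a field) to pick $a_2$ with $\omega_2(a_2)=-\omega_1(a_1)$, so $(a_1,a_2)\in I$ and thus $I_1=A_1$.

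For the forward implication, assume $I_1=A_1$. Since by hypothesis $I\subseteq\ker(\omega_1\bowtie\omega_2)$, only the reverse inclusion is at stake. My plan is to decompose
$$\ker(\omega_1\bowtie\omega_2) \;=\; (\ker\omega_1\oplus 0) \,+\, (0\oplus\ker\omega_2) \,+\, K\cdot(a_1,a_2),$$
where $(a_1,a_2)\in I$ is a fixed element with $\omega_1(a_1)=1$ (hence $\omega_2(a_2)=-1$)---such an element exists by $I_1=A_1$ combined with surjectivity of $\omega_1$---and then show that each summand lies in $I$.

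The essential step---and the only place commutativity intervenes---is showing $\ker\omega_1\oplus 0\subseteq I$. For $k_1\in\ker\omega_1$, both $(k_1,0)(a_1,a_2)$ and $(a_1,a_2)(k_1,0)$ belong to $I$; using the product (\ref{uno}), they equal $(k_1 a_1-k_1,0)$ and $(a_1 k_1,0)$ respectively. Subtracting and invoking commutativity of $A_1$ to collapse the $k_1 a_1 = a_1 k_1$ terms yields $(-k_1,0)\in I$. The symmetric argument, using commutativity of $A_2$, gives $(0,k_2)\in I$ for all $k_2\in\ker\omega_2$. The diagonal part is immediate, since $\lambda(a_1,a_2)\in I$ for every $\lambda\in K$. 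Then, for an arbitrary $(x_1,x_2)\in\ker(\omega_1\bowtie\omega_2)$, setting $\lambda=\omega_1(x_1)$ (so $\omega_2(x_2)=-\lambda$) and writing $(x_1,x_2) = \lambda(a_1,a_2) + (x_1-\lambda a_1,0) + (0,x_2-\lambda a_2)$ exhibits it as a sum of three elements of $I$, finishing the proof.

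The expected obstacle is the first half of the previous paragraph: a single one-sided product $(k_1,0)(a_1,a_2)$ produces only the ``contaminated'' element $(k_1 a_1-k_1,0)$, not the clean $(k_1,0)$. The idea of forming both one-sided products and subtracting, so that the $k_1 a_1$ and $a_1 k_1$ contributions cancel via commutativity, is the trick that unlocks the argument and makes visible why the commutativity hypothesis on $A_1$ and $A_2$ is indispensable here.
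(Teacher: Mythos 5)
Your proof is correct and rests on the same key trick as the paper's: forming both one-sided products of a fixed element of $I$ of nonzero weight with a ``pure'' element and subtracting, so that commutativity of $A_1$ (resp.\ $A_2$) cancels the cross terms. The only difference is organizational --- you normalize the weight to $1$, first show $\textrm{Ker}\ \omega_1\oplus 0$ and $0\oplus\textrm{Ker}\ \omega_2$ lie in $I$, and then decompose a general kernel element linearly, which avoids the two-case analysis the paper performs.
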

\begin{proof}
Let us suppose $I_1=A_1$ and choose $a\in A_1$ such that
$\omega_1(a)\neq 0$. Then there exists $b\in A_2$ such that
$(a,b)\in I$, note that in particular $\omega_1(a)=-\omega_2(b)\neq
0$.

Now take $(a_1,a_2)\in\textrm{Ker}\ \omega_1\bowtie\omega_2$, i.e.,
$\omega_1(a_1)+\omega_2(a_2)=0$. Being $I\unlhd A_1\bowtie A_2$ and
due to the commutativity of each $A_i$ we have:
$$(a,b)(a_1,0)-(a_1,0)(a,b)=(-\omega_2(b)a_1,\omega_1(a_1)b)\in I$$
$$(a,b)(0,a_2)-(0,a_2)(a,b)=(\omega_2(a_2)a,-\omega_1(a)a_2)\in I$$
and there are two possible cases:

Firstly, if $\omega_1(a_1)=-\omega_2(a_2)=0$, then we have
$$(a_1,a_2)=-(\omega_2(b))^{-1}(-\omega_2(b)a_1,0)-(\omega_1(a))^{-1}(0,-\omega_1(a)a_2)\in
I$$ and secondly, if $\omega_1(a_1)=-\omega_2(a_2)\neq 0$, then
$$(a_1,a_2)=(\omega_2(b))^{-1}\Big((\omega_2(a_2)a,-\omega_1(a)a_2)-(-\omega_2(b)a_1,\omega_1(a_1)b)+\omega_1(a_1)(a,b)\Big)\in
I$$ Thus, in both cases $(a_1,a_2)\in I$ and the equality holds. The
converse was discussed in the previous remark.
\end{proof}

\begin{defi}
Let $(A,\omega)$ be a baric algebra. We define the set
$\mathcal{I}(A,\omega)$ to be:
$$\mathcal{I}(A,\omega)=\{I\unlhd A\ |\ I\subseteq\textrm{Ker}\
\omega\}$$
\end{defi}

\begin{prop}
Let $(A_i,\omega_i)$ for $i=1,2$ be commutative unital baric algebras. Then
the sets $\mathcal{I}(A_1,\omega_1)\times\mathcal{I}(A_2,\omega_2)$
and $\mathcal{I}(A_1\bowtie
A_2,\omega_1\bowtie\omega_2)\setminus\{\textrm{Ker}\
\omega_1\bowtie\omega_2\}$ are bijective.
\end{prop}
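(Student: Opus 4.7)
The plan is to exhibit explicit maps in both directions and check they are mutually inverse. Define
$$\Phi:\mathcal{I}(A_1,\omega_1)\times\mathcal{I}(A_2,\omega_2)\longrightarrow\mathcal{I}(A_1\bowtie A_2,\omega_1\bowtie\omega_2)\setminus\{\textrm{Ker}\ \omega_1\bowtie\omega_2\}$$
by $\Phi(I_1,I_2)=I_1\oplus I_2$ (viewed as a subspace of $A_1\bowtie A_2$), and $\Psi(I)=(p_1(I),p_2(I))=(I_1,I_2)$, where $p_i$ are the projections.

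First I would check that $\Phi$ is well defined. Because $x_i\in I_i\subseteq\textrm{Ker}\ \omega_i$, the product formula~(\ref{uno}) collapses on $I_1\oplus I_2$ to $(a_1,a_2)(x_1,x_2)=(a_1x_1,a_2x_2)$ and similarly from the right, so $I_1\oplus I_2\unlhd A_1\bowtie A_2$ and is clearly contained in $\textrm{Ker}\ \omega_1\bowtie\omega_2$. Since $\omega_i$ is non-zero, $\textrm{Ker}\ \omega_1$ is proper in $A_1$, so letting $e_i$ be the units and using $\omega_i(e_i)=1$ the element $(e_1,-e_2)$ lies in $\textrm{Ker}\ \omega_1\bowtie\omega_2$ but not in $I_1\oplus I_2$; thus the containment is strict.

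Next I would verify that $\Psi$ is well defined. By the previous proposition the hypothesis $I\neq\textrm{Ker}\ \omega_1\bowtie\omega_2$ forces $I_1\neq A_1$ and $I_2\neq A_2$. The main technical step, and the one requiring the unital hypothesis, is to show $I_i\subseteq\textrm{Ker}\ \omega_i$. Suppose for contradiction that there is $a_2\in I_2$ with $\omega_2(a_2)\neq 0$; pick $a_1\in A_1$ with $(a_1,a_2)\in I$. Then
$$(e_1,0)(a_1,a_2)=(a_1+\omega_2(a_2)e_1,0)\in I,$$
so $a_1+\omega_2(a_2)e_1\in I_1$; combined with $a_1\in I_1$ we get $\omega_2(a_2)e_1\in I_1$ and hence $e_1\in I_1$. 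Since $I_1$ is a right ideal of $A_1$ and $A_1$ is commutative, this yields $I_1=A_1$, a contradiction. Symmetrically $I_1\subseteq\textrm{Ker}\ \omega_1$. Now the earlier proposition on when $I_1$ is an ideal (applicable thanks to $I_1\neq A_1$ and $I_2\subseteq\textrm{Ker}\ \omega_2$) gives $I_1\unlhd A_1$, and likewise $I_2\unlhd A_2$, so $\Psi(I)\in\mathcal{I}(A_1,\omega_1)\times\mathcal{I}(A_2,\omega_2)$.

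Finally I would check $\Psi\circ\Phi=\textrm{id}$ and $\Phi\circ\Psi=\textrm{id}$. The former is immediate from the definition. For the latter, the inclusion $I\subseteq I_1\oplus I_2$ is obvious. For the reverse inclusion, given $x_1\in I_1$ pick $(x_1,x_2')\in I$; since we have shown $\omega_1(x_1)=0$, multiplying by $(0,e_2)$ gives
$$(0,e_2)(x_1,x_2')=(0,x_2'+\omega_1(x_1)e_2)=(0,x_2')\in I,$$
so $(x_1,0)=(x_1,x_2')-(0,x_2')\in I$. Symmetrically $(0,x_2)\in I$ for all $x_2\in I_2$, and summing yields $I_1\oplus I_2\subseteq I$. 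The only delicate point throughout is the argument forcing $I_2\subseteq\textrm{Ker}\ \omega_2$; everything else is bookkeeping with~(\ref{uno}).
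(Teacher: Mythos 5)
Your proof is correct and follows essentially the same route as the paper: the same pair of maps $(I_1,I_2)\mapsto I_1\oplus I_2$ and $I\mapsto(I_1,I_2)$, with well-definedness of the second map obtained from the earlier propositions on $I_1$ being an ideal and on $I_1=A_1$ forcing $I=\textrm{Ker}\ \omega_1\bowtie\omega_2$, and the reverse inclusion $I_1\oplus I_2\subseteq I$ recovered by multiplying a preimage $(x_1,x_2')\in I$ by a unital element. You in fact supply a couple of details the paper leaves implicit, such as checking that $I_1\oplus I_2$ is strictly contained in $\textrm{Ker}\ \omega_1\bowtie\omega_2$.
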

\begin{proof}
Let us define maps
$$\varphi:\mathcal{I}(A_1,\omega_1)\times\mathcal{I}(A_2,\omega_2)\longrightarrow\mathcal{I}(A_1\bowtie
A_2,\omega_1\bowtie\omega_2)\setminus\{\textrm{Ker}\
\omega_1\bowtie\omega_2\}$$ and $$\psi:\mathcal{I}(A_1\bowtie
A_2,\omega_1\bowtie\omega_2)\setminus\{\textrm{Ker}\
\omega_1\bowtie\omega_2\}\longrightarrow\mathcal{I}(A_1,\omega_1)\times\mathcal{I}(A_2,\omega_2)$$
by $\varphi(I,J)=I\bowtie J=\{(a,b)\in A_1\bowtie A_2\ |\ a\in I,\
b\in J\}$ and $\psi(I)=(I_1,I_2)$.

Proposition 5.1 implies that $\varphi$ is well-defined. In the same
way Propositions 5.2 and 5.3 imply that $\psi$ is well-defined.
Thus, it is enough to see that $\varphi$ and $\psi$ are each other's inverse.

First, let $I_i\in\mathcal{I}(A_i,\omega_i)$. Then, obviously $(I_1\bowtie I_2)_i=I_i$ and this shows that $\psi\varphi(I_1,I_2)=(I_1,I_2)$.

On the other hand, let $I\in\mathcal{I}(A_1\bowtie A_2)\setminus\{\textrm{Ker}\ \omega_1\bowtie\omega_2\}$. Clearly $I\subseteq I_1\bowtie I_2$. Conversely, let $(a,b)\in I_1\bowtie I_2$. By definition there exists $b'\in A_2$ such that $(a,b')\in I$. Since $w_1(a)=0$ it follows that $w_2(b')=0$ and, since $I$ is an ideal we have that $(a,0)=(1,0)(a,b')\in I$. In the same way $(0,b)\in I$ and we have that $I\subseteq I_1\bowtie I_2$; i.e., that $\varphi\psi(I)=I$ and the result follows.
\end{proof}

\begin{exa}
Let $K$ be any field. We construct $(K\bowtie K,\
\textrm{id}_K\bowtie\textrm{id}_K)$ like in Example 2.1. Then a direct
application of the previous proposition gives us the simplicity of
$\textrm{Ker}\ \textrm{id}_K\bowtie\textrm{id}_K$.
\end{exa}

In \cite{CG} the notion of decomposable baric algebra was introduced. Namely, a baric algebra $(A,\omega)$ with an idempotent of weight 1 is decomposable if there are non-trivial ideals $N_1$ and $N_2$ of $A$, both contained in $\textrm{Ker}\ \omega$ and such that $\textrm{Ker}\ \omega=N_1\oplus N_2$. Otherwise $(A,\omega)$ is indecomposable.

The following result shows that our construction works nicely with respect to indecomposability in the commutative case.

\begin{prop}
Let $(A_i,\omega_i)$ be commutative unital indecomposable baric algebras for $i=1,2$. Then $(A_1\bowtie A_2,\omega_1\bowtie\omega_2)$ is also indecomposable.
\end{prop}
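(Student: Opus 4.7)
The plan is to argue by contradiction using Proposition 5.4 as the key tool. Assume $(A_1\bowtie A_2,\omega_1\bowtie\omega_2)$ were decomposable; that is, $\textrm{Ker}\ \omega_1\bowtie\omega_2=N_1\oplus N_2$ for non-zero ideals $N_1,N_2$ of $A_1\bowtie A_2$ contained in $\textrm{Ker}\ \omega_1\bowtie\omega_2$.

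First I would observe that neither $N_i$ can coincide with $\textrm{Ker}\ \omega_1\bowtie\omega_2$, since otherwise the complementary ideal would have to be zero, contradicting non-triviality. Consequently both $N_i$ lie in $\mathcal{I}(A_1\bowtie A_2,\omega_1\bowtie\omega_2)\setminus\{\textrm{Ker}\ \omega_1\bowtie\omega_2\}$, and by the bijection of Proposition 5.4 (which is precisely where the commutative and unital hypotheses on the $A_i$ enter) we may write $N_i=M_i\bowtie M'_i$ with $M_i\in\mathcal{I}(A_1,\omega_1)$ and $M'_i\in\mathcal{I}(A_2,\omega_2)$.

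The contradiction is then immediate from a set-level computation inside $A_1\oplus A_2$:
\[
N_1+N_2=(M_1\bowtie M'_1)+(M_2\bowtie M'_2)=(M_1+M_2)\bowtie(M'_1+M'_2)\subseteq\textrm{Ker}\ \omega_1\times\textrm{Ker}\ \omega_2,
\]
whereas, writing $e_i$ for the unit of $A_i$ (so that $\omega_i(e_i)=1$), the element $(e_1,-e_2)$ lies in $\textrm{Ker}\ \omega_1\bowtie\omega_2$ but has a first coordinate of weight one, so $(e_1,-e_2)\notin\textrm{Ker}\ \omega_1\times\textrm{Ker}\ \omega_2$. Therefore $N_1+N_2\subsetneq\textrm{Ker}\ \omega_1\bowtie\omega_2$, contradicting the direct-sum assumption.

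The main technical obstacle is already absorbed into Proposition 5.4, so the present proof is chiefly a matter of organising that classification correctly; the delicate point one must not overlook is the exceptional ideal $\textrm{Ker}\ \omega_1\bowtie\omega_2$ which falls outside the bijection, which is exactly what forces the preliminary step of ruling out $N_i=\textrm{Ker}\ \omega_1\bowtie\omega_2$ before the bijection is applied.
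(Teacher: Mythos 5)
Your proof is correct, and it takes a genuinely different route from the paper's. The paper also reduces to Proposition 5.4, but then transfers the decomposition down to the factors: it shows $\textrm{Ker}\ \omega_i=S_i\oplus T_i$ and runs a case analysis on which of the components $S_1,T_1$ vanish, in each case contradicting the indecomposability of $A_1$ or of $A_2$. You instead observe that \emph{every} ideal in $\mathcal{I}(A_1\bowtie A_2,\omega_1\bowtie\omega_2)\setminus\{\textrm{Ker}\ \omega_1\bowtie\omega_2\}$ is of the form $M\bowtie M'$ with $M\subseteq\textrm{Ker}\ \omega_1$ and $M'\subseteq\textrm{Ker}\ \omega_2$, hence lives inside the proper subspace $\textrm{Ker}\ \omega_1\times\textrm{Ker}\ \omega_2\subsetneq\textrm{Ker}\ \omega_1\bowtie\omega_2$ (witnessed by $(e_1,-e_2)$), so no two such ideals can sum to the whole kernel. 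Your preliminary step ruling out $N_i=\textrm{Ker}\ \omega_1\bowtie\omega_2$ is exactly the right care to take, and the set-level identity $(M_1\bowtie M'_1)+(M_2\bowtie M'_2)=(M_1+M_2)\bowtie(M'_1+M'_2)$ is valid for subspaces. What your argument buys is brevity and, more interestingly, strength: you never use the indecomposability of the $A_i$, so you have in fact shown that $A_1\bowtie A_2$ is indecomposable for \emph{any} commutative unital baric algebras $A_1,A_2$ --- the paper's element $(e_1,-e_2)$ appears there too, but only to dispatch one subcase, whereas you make it carry the whole proof. The paper's longer route is arguably more informative about where a hypothetical decomposition would have to come from, but yours is the cleaner and more general argument.
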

\begin{proof}
Assume that $(A_1\bowtie A_2,\omega_1\bowtie\omega_2)$ is decomposable. Then there exist ideals $S,T$ such that $\textrm{Ker}\ \omega_1\bowtie\omega_2=S\oplus T$. Since both $S,T$ are non-trivial we can apply Proposition 5.4 to get that $S=(S_1,S_2)$ and $T=(T_1,T_2)$ with $S_i,T_i\unlhd A_i$.

Clearly $S_i+T_i\subseteq\textrm{Ker}\ \omega_i$. Now, if $x\in S_i\cap T_i$ it follows that $(x,0)\in S\cap T=0$ so $S_i$ and $T_i$ have direct sum. Moreover, since $\textrm{Ker}\ \omega_i\unlhd A_1\bowtie A_2$, $\textrm{Ker}\ \omega_1\cap T=\textrm{Ker}\ \omega_2\cap S=0$ it follows that $\textrm{Ker}\ \omega_i=S_i\oplus T_i$.

If $S_1=0$ then it must be $S_2\neq 0$. Moreover, $\textrm{Ker}\ \omega_1\subseteq T_1$ and if it was $T_2=0$ it follows that $\textrm{Ker}\ \omega_2\subseteq S_2$ and $\textrm{Ker}\ \omega_1\oplus\textrm{Ker}\ \omega_2=\textrm{Ker}\ \omega_1\bowtie\omega_2$ which is false by Proposition 5.4 again. Consequently we have proved that if $S_1=0$, then $S_2,T_2\neq 0$ and $(A_2,\omega_2)$ is decomposable.

In the same way it follows that $T_1=0$ implies that $(A_2,\omega_2)$ is decomposable.

If both $S_1$ and $T_1$ are non-zero, then $(A_1,\omega_1)$ is decomposable and the result follows.
\end{proof}

\section{Associativity}
We will start this section with the following lemma:

\begin{lem}
Let $(A_i,\omega_i)$ for $i=1,2$ be baric algebras. Let
$x=(a_1,a_2)$, $y=(b_1,b_2)$ and $z=(c_1,c_2)$ be elements of
$A_1\bowtie A_2$. Then we have that:
$$(x,y,z)=((a_1,b_1,c_1)+\omega_2(b_2)(a_1c_1-\omega_1(c_1)a_1),(a_2,b_2,c_2)+\omega_1(b_1)(a_2c_2-\omega_2(c_2)a_2)).$$
\end{lem}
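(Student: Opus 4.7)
The plan is a direct, if somewhat tedious, expansion: compute $(xy)z$ and $x(yz)$ componentwise using the definition (\ref{uno}), then subtract and observe that most cross-terms cancel while the surviving terms reassemble into the asserted associators plus the advertised weight-correction.

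First I would apply (\ref{uno}) once to get
$$xy=(a_1b_1+\omega_2(b_2)a_1,\;a_2b_2+\omega_1(b_1)a_2),$$
and then apply it again with $z=(c_1,c_2)$ on the right. In the first component this produces the four terms
$(a_1b_1)c_1+\omega_2(b_2)(a_1c_1)+\omega_2(c_2)(a_1b_1)+\omega_2(c_2)\omega_2(b_2)a_1$,
using the fact that $\omega_2$ is multiplicative, so $\omega_2$ of the second component of $xy$ is $\omega_2(b_2)\omega_2(c_2)+\omega_1(b_1)\omega_2(a_2)\cdot0$—well, more carefully, the weight coefficient picked up in the second step is $\omega_2(c_2)$ multiplying the first component of $xy$. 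The second component is symmetric with indices $1$ and $2$ swapped.

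Next I would expand $x(yz)$. Here $yz=(b_1c_1+\omega_2(c_2)b_1,\,b_2c_2+\omega_1(c_1)b_2)$ and, since $\omega_2$ is a homomorphism, the scalar that appears when multiplying $x$ by $yz$ on the right in the first component is
$$\omega_2\bigl(b_2c_2+\omega_1(c_1)b_2\bigr)=\omega_2(b_2)\omega_2(c_2)+\omega_1(c_1)\omega_2(b_2).$$
Expanding yields the four terms $a_1(b_1c_1)+\omega_2(c_2)(a_1b_1)+\omega_2(b_2)\omega_2(c_2)a_1+\omega_1(c_1)\omega_2(b_2)a_1$.

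Finally I would subtract. In the first component the two occurrences of $\omega_2(c_2)(a_1b_1)$ and of $\omega_2(b_2)\omega_2(c_2)a_1$ cancel, $(a_1b_1)c_1-a_1(b_1c_1)$ collapses to $(a_1,b_1,c_1)$, and what remains is $\omega_2(b_2)(a_1c_1)-\omega_1(c_1)\omega_2(b_2)a_1=\omega_2(b_2)\bigl(a_1c_1-\omega_1(c_1)a_1\bigr)$. The second component follows by the evident $1\leftrightarrow 2$ symmetry of the construction, and together they give the stated formula. The only obstacle is bookkeeping the eight terms in each component and being careful that $\omega_i$ is being applied to a sum, not just a single summand; there is no conceptual difficulty once equation (\ref{uno}) is unfolded twice.
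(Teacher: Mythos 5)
Your direct expansion is correct: both components of $(xy)z$ and $x(yz)$ are computed accurately, the cancellations you identify are the right ones, and the surviving terms assemble into the stated formula (the momentary false start about ``$\omega_2$ of the second component of $xy$'' is harmless since you immediately correct it to the relevant scalar $\omega_2(c_2)$). The paper omits the proof of this lemma as a routine computation, and your argument is exactly the intended one.
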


We can use this to prove the following characterization:

\begin{prop}
Let $(A_i,\omega_i)$ with $i=1,2$ be baric algebras. Then the
algebra $A_1\bowtie A_2$ is associative if and only if
$(a_1,a_2)(b_1,b_2)=(\omega_1\bowtie\omega_2(b_1,b_2))(a_1,a_2)$ for
all $(a_1,a_2),(b_1,b_2)\in A_1\bowtie A_2$.
\end{prop}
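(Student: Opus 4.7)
The plan is to work directly from the associator formula in the preceding lemma and from a simplification of the stated product condition, then extract the two implications by specializing the second coordinate of $y$.

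First I would rewrite the hypothesized identity $(a_1,a_2)(b_1,b_2)=\omega_1\bowtie\omega_2(b_1,b_2)(a_1,a_2)$ explicitly using the product (\ref{uno}) and the definition of $\omega_1\bowtie\omega_2$. After cancelling the common term $\omega_2(b_2)a_1$ in the first coordinate (respectively $\omega_1(b_1)a_2$ in the second) the identity collapses to the pair of coordinate-wise conditions
\begin{equation*}
a_1 b_1=\omega_1(b_1)a_1\quad\text{for all }a_1,b_1\in A_1,\qquad a_2 b_2=\omega_2(b_2)a_2\quad\text{for all }a_2,b_2\in A_2.
\end{equation*}
Call this property $(\star)$. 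So the proposition amounts to showing that associativity of $A_1\bowtie A_2$ is equivalent to $(\star)$.

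For the easy direction, assume $(\star)$. Then every associator $(a_i,b_i,c_i)$ computed inside $A_i$ vanishes, because $(a_ib_i)c_i=\omega_i(b_i)a_ic_i=\omega_i(b_i)\omega_i(c_i)a_i=\omega_i(b_i c_i)a_i=a_i(b_ic_i)$. Moreover, the correction terms $a_1c_1-\omega_1(c_1)a_1$ and $a_2c_2-\omega_2(c_2)a_2$ appearing in the lemma's associator are also zero by $(\star)$. Hence the associator formula yields $(x,y,z)=0$ for all $x,y,z\in A_1\bowtie A_2$.

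For the converse, assume $A_1\bowtie A_2$ is associative, so each coordinate of the associator in the lemma vanishes for all choices of $a_i,b_i,c_i$. Looking at the first coordinate, first specialize $y=(b_1,0)$: the term $\omega_2(b_2)(a_1c_1-\omega_1(c_1)a_1)$ vanishes and we obtain $(a_1,b_1,c_1)=0$ for all $a_1,b_1,c_1\in A_1$, so $A_1$ is associative. Next, since $\omega_2$ is non-zero, choose $b_2\in A_2$ with $\omega_2(b_2)\ne 0$ and arbitrary $b_1\in A_1$; the vanishing of the first coordinate together with $(a_1,b_1,c_1)=0$ forces $a_1c_1=\omega_1(c_1)a_1$ for all $a_1,c_1\in A_1$. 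Applying the same argument to the second coordinate (specializing $b_1=0$ and then $b_1$ with $\omega_1(b_1)\ne 0$) gives the analogous identity in $A_2$. This is exactly $(\star)$, completing the proof.

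The only mildly delicate point is the two-step specialization of $y$ in the converse: one first kills the correction term to isolate the associator of $A_i$, and then uses a nonzero weight to isolate the correction term itself; nothing else in the argument goes beyond routine bookkeeping with the formulas provided.
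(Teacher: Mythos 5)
Your proof is correct and follows essentially the same route as the paper: both reduce the stated product identity to the coordinate-wise conditions $a_ib_i=\omega_i(b_i)a_i$ and then use the associator formula of Lemma 6.1, choosing $b_i$ of nonzero weight to isolate the correction term. The only cosmetic difference is that the paper obtains associativity of each $A_i$ by viewing it as a subalgebra of $A_1\bowtie A_2$, whereas you get it by specializing $b_2=0$ (resp.\ $b_1=0$) in the associator formula; these amount to the same thing.
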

\begin{proof}
Put $x=(a_1,a_2)$, $y=(b_1,b_2)$ and $z=(c_1,c_2)$. Let us suppose
that $A_1\bowtie A_2$ is associative. Then each $A_i$ is also
associative because they are subalgebras of $A_1\bowtie A_2$. So, by
Lemma 6.1:
$$0=(x,y,z)=(\omega_2(b_2)(a_1c_1-\omega_1(c_1)a_1),\omega_1(b_1)(a_2c_2-\omega_2(c_2)a_2)).$$
and choosing $b_i\notin \textrm{Ker}\ \omega_i$ we have that
$a_ic_i=\omega_i(c_i)a_i$ for all $a_i,c_i\in A_i$. Thus we have
that $(a_1,a_2)(b_1,b_2)(=\omega_1\bowtie\omega_2(b_1,b_2))(a_1,a_2)$
and the proof is complete as the converse is just an easy
computation.
\end{proof}

A $K$-algebra $A$ is called left (resp. right) alternative if
$(x,x,y)=0$ for all $x,y\in A$ (resp. $(x,y,y)=0$ for all $x,y\in
A$). We say that $A$ is alternative if it is both left and right
alternative. Of course an associative algebra is left and right
alternative. As an easy consequence of Lemma 6.1 we have:

\begin{prop}
Let $(A_i,\omega_i)$ with $i=1,2$ be associative baric algebras.
Then the following are equivalent:
\begin{itemize}
\item[(i)] $A_1\bowtie A_2$ is associative.
\item[(ii)] $A_1\bowtie A_2$ is left alternative.
\item[(iii)] $A_1\bowtie A_2$ is right alternative.
\end{itemize}
\end{prop}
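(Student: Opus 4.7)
The plan is to derive Proposition~6.3 as a direct specialization of Lemma~6.1, using Proposition~6.2 to close the loop back to associativity.

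Since $(i)\Rightarrow(ii)$ and $(i)\Rightarrow(iii)$ are immediate (every associative algebra is alternative on both sides), the real content lies in the two converses. I would begin by simplifying the formula in Lemma~6.1: because $A_1$ and $A_2$ are themselves associative, the inner associators $(a_i,b_i,c_i)$ vanish, leaving
$$(x,y,z)=\bigl(\omega_2(b_2)(a_1c_1-\omega_1(c_1)a_1),\ \omega_1(b_1)(a_2c_2-\omega_2(c_2)a_2)\bigr)$$
for $x=(a_1,a_2)$, $y=(b_1,b_2)$, $z=(c_1,c_2)$.

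For $(ii)\Rightarrow(i)$ I would set $y=x$, i.e.\ $b_i=a_i$, in this simplified formula. Left alternativity then forces $\omega_2(a_2)(a_1c_1-\omega_1(c_1)a_1)=0$ and $\omega_1(a_1)(a_2c_2-\omega_2(c_2)a_2)=0$ for all choices of $a_i,c_i$. Fixing an $a_2\in A_2$ with $\omega_2(a_2)\neq 0$ (which exists because $\omega_2$ is non-trivial) yields $a_1c_1=\omega_1(c_1)a_1$ for all $a_1,c_1\in A_1$, and symmetrically $a_2c_2=\omega_2(c_2)a_2$ for all $a_2,c_2\in A_2$. Together these say exactly that $(a_1,a_2)(b_1,b_2)=(\omega_1\bowtie\omega_2(b_1,b_2))(a_1,a_2)$, so Proposition~6.2 delivers (i). The implication $(iii)\Rightarrow(i)$ is handled identically after putting $z=y$ instead of $y=x$; the resulting conditions $\omega_2(b_2)(a_1b_1-\omega_1(b_1)a_1)=0$ and $\omega_1(b_1)(a_2b_2-\omega_2(b_2)a_2)=0$ again collapse to the hypothesis of Proposition~6.2 after selecting $b_i$ with $\omega_i(b_i)\neq 0$.

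There is no real obstacle here: the only point requiring care is the use of the non-triviality of each $\omega_i$ to evacuate the scalar coefficient and leave a universally quantified identity on the other factor. Everything else is algebraic bookkeeping inside the formula of Lemma~6.1, making the statement an \emph{easy consequence} in the sense announced in the text.
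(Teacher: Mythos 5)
Your proof is correct and follows exactly the route the paper intends: the paper omits the argument, presenting the proposition as ``an easy consequence of Lemma 6.1,'' and you supply precisely that argument---specializing the associator formula with $y=x$ (resp.\ $z=y$), using the non-triviality of each $\omega_i$ to strip the scalar factor, and closing via the characterization $xy=\omega_1\bowtie\omega_2(y)\,x$ of the preceding proposition. No discrepancies to report.
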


\begin{exa}
Let $K$ be any field. Thanks to Proposition 3.1(ii) and recalling
Example 2.1  , we can unambiguously define the baric algebra
$(K^{\bowtie n},\textrm{id}_K^{\bowtie n})$, where $K^{\bowtie n}$
stands for $K\bowtie\dots\bowtie K$ and $\textrm{id}_K^{\bowtie
n}=\textrm{id}_K\bowtie\dots\bowtie\textrm{id}_K$ is defined by the
formula $\textrm{id}_K^{\bowtie
n}(\alpha_1,\dots,\alpha_n)=\alpha_1+\dots+\alpha_n$. Then, due to
Proposition 6.1, $K^{\bowtie n}$ is associative.
\end{exa}

The remaining of this section will be devoted to show that, under
certain assumptions, the previous example is the only situation in
which our construction is associative.

Let $(A,\omega)$ be a baric algebra over a field $K$ and let us
choose $\{e_i\ |\ i\in I\}$ any $K$-basis for $A$. Put
$\epsilon_i=\omega(e_i)$ for all $i\in I$ and observe that we can
suppose, without loss of generality, that $\epsilon_i\in\{0,1\}$ for
all $i\in I$. Moreover we have:

\begin{lem}
Let $K$ be a field with $\textrm{char}\ K= 0$ and let $(A,\omega)$
be a baric $K$-algebra of countable dimension. Then $A$ admits a
basis such that every element in the basis is of weight 1.
\end{lem}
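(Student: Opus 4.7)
The plan is to take any $K$-basis of $A$, normalize by rescaling so that every basis vector has weight in $\{0,1\}$, and then apply a triangular change of variables that pushes the weight-zero vectors up to weight one by adding a fixed weight-one basis element to each of them.

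Concretely, choose any $K$-basis $\{e_i\}_{i\in I}$ of $A$. As observed just above the lemma, after replacing each $e_i$ by $\omega(e_i)^{-1}e_i$ whenever $\omega(e_i)\neq 0$, we may assume $\epsilon_i:=\omega(e_i)\in\{0,1\}$ for every $i\in I$. Since $\omega$ is non-trivial, the set $J=\{i\in I:\epsilon_i=1\}$ is nonempty; fix any $j_0\in J$ and put
\[
f_i:=\begin{cases} e_i, & i\in J,\\ e_i+e_{j_0}, & i\notin J,\end{cases}
\]
so that $\omega(f_i)=1$ in either case.

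The only step requiring verification is that $\{f_i\}_{i\in I}$ is again a $K$-basis. The spans coincide, since one recovers $e_i=f_i$ for $i\in J$ and $e_i=f_i-f_{j_0}$ for $i\notin J$. Linear independence follows by expanding any hypothetical relation $\sum_i \lambda_i f_i=0$ back in the $\{e_i\}$-basis: the coefficient of $e_i$ for $i\notin J$ is $\lambda_i$, forcing $\lambda_i=0$ for every such $i$; the remaining relation then sits inside the linearly independent family $\{e_i\}_{i\in J}$ and so forces the rest of the $\lambda_i$ to vanish as well. I do not foresee any genuine obstacle; the hypotheses $\textrm{char}\ K=0$ and countable dimension are the standing assumptions of the section rather than essential ingredients of this particular argument.
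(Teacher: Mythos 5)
Your proof is correct, but it follows a genuinely different (and in fact more economical) route than the paper's. The paper enumerates the basis as a sequence $e_1,e_2,\dots$ with $\epsilon_1=1$ and replaces $e_n$ by the normalized partial sum $e'_n=\bigl(\sum_{j\leq n}\epsilon_j\bigr)^{-1}\sum_{j\leq n}e_j$; this is a triangular change of basis whose diagonal entries are the inverses of the integers $\sum_{j\leq n}\epsilon_j\geq 1$, which is exactly where both hypotheses enter: countable dimension is needed to order the basis by $\mathbb{N}$, and $\textrm{char}\ K=0$ is needed so that these integer partial sums are invertible. Your construction instead fixes a single weight-one vector $e_{j_0}$ and translates each weight-zero basis vector by it, $f_i=e_i+e_{j_0}$; your verification that $\{f_i\}$ remains a basis (spans agree, and a dependence relation kills the coefficients $\lambda_i$ for $i\notin J$ first, then the rest) is complete and uses only that $\omega$ is a non-zero linear form, so some basis vector has non-zero weight. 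Your closing observation is accurate: your argument needs neither characteristic zero nor countability, so it proves a strictly stronger statement, namely that every baric algebra over any field admits a basis of weight-one elements. Since the lemma is only used to feed Proposition 6.3, where the same two hypotheses reappear for other reasons, nothing downstream is affected, but your proof would let the lemma itself be stated without them.
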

\begin{proof}
Let $\{e_i\ |\ i\in I\}$ with $|I|\leq\aleph_0$ be a $K$-basis of
$A$. We can suppose that $I\subseteq\mathbb{N}$ and that
$\epsilon_1=1$. Now for each $n\in I$ we define
$\displaystyle{e'_n=\frac{1}{\sum_{j\leq n}\epsilon_j}\sum_{j\leq
n}e_j}$. Then $\{e'_i\ |\ i\in I\}$ is the desired basis.
\end{proof}

\begin{prop}
Let $K$ be a field with $\textrm{char}\ K$=0 and let $(A,\omega)$ be
a countable-dimensional baric $K$-algebra such that $xy=\omega(y)x$
for all $x,y\in A$. Then, if $\nu=\textrm{dim}_K\ A$, we have
$(A,\omega)\cong(K^{\bowtie\nu},\textrm{id}_K^{\bowtie\nu})$ as
baric algebras.
\end{prop}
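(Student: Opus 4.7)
The plan is to exploit Lemma 6.2 to put $A$ in a standard form and then read off the isomorphism from the multiplication table of a basis of weight-one elements.

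First I would apply Lemma 6.2 to produce a $K$-basis $\{e_i\}_{i\in I}$ of $A$ with $|I|=\nu$ and $\omega(e_i)=1$ for every $i\in I$. The hypothesis $xy=\omega(y)x$ then collapses the multiplication table to $e_ie_j=\omega(e_j)e_i=e_i$ for all $i,j\in I$, so the $K$-algebra structure of $A$ is completely determined by the set $I$.

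Next I would describe the target in matching terms. For finite $\nu$, Example 6.1 together with Proposition 6.1 tells us that $K^{\bowtie\nu}$ is associative and satisfies $xy=\textrm{id}_K^{\bowtie\nu}(y)x$; its standard basis $\{f_1,\dots,f_\nu\}$ consists of weight-one vectors, so $f_if_j=f_i$. For countably infinite $\nu$, using the associativity of $\bowtie$ (Proposition 3.1(ii)) I would define $K^{\bowtie\nu}$ as the directed union of the $K^{\bowtie n}$ under their natural inclusions; this is a baric $K$-algebra with a weight-one basis $\{f_i\}_{i\in\mathbb{N}}$ again satisfying $f_if_j=f_i$.

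Finally, the $K$-linear map $\Phi\colon A\to K^{\bowtie\nu}$ defined by $\Phi(e_i)=f_i$ is plainly a vector-space isomorphism, preserves weights since $\omega(e_i)=1=\textrm{id}_K^{\bowtie\nu}(f_i)$, and satisfies $\Phi(e_ie_j)=f_i=f_if_j=\Phi(e_i)\Phi(e_j)$, which extends to all elements by bilinearity. The only delicate step, and the one I would write out most carefully, is the precise definition of $K^{\bowtie\nu}$ when $\nu$ is infinite, since the iterated construction in Example 6.1 is only stated for finite $n$; once this extension is in place, the argument really is just the basis comparison.
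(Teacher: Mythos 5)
Your proof is correct and follows essentially the same route as the paper: invoke Lemma 6.2 to get a weight-one basis, observe that the hypothesis forces $e_ie_j=e_i$, and match this basis against the standard one of $K^{\bowtie\nu}$ (the paper phrases this as the identity $(A,\omega)=(\bowtie_i Ke_i,\bowtie_i\omega|_{Ke_i})$, which is the same computation). Your explicit attention to defining $K^{\bowtie\nu}$ for infinite $\nu$ as a directed union is a point the paper silently glosses over, and is worth keeping.
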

\begin{proof}
We consider the $K$-basis of $A$ $\{e_i\ |\ i\in I\}$ with
$\nu=|I|\leq\aleph_0$ and $\omega(e_i)=1$ for all $i\in I$ given by
the previous lemma. We define
$(A_i,\omega_i)=(Ke_i,\omega|_{Ke_i})$. Obviously
$(A,\omega)=(\bowtie_{i=1}^{\nu}A_i,\bowtie_{i=1}^{\nu}\omega_i)$
and the proof is complete as
$(Ke_i,\omega|_{Ke_i})\cong(K,\textrm{id}_K)$ trivially.
\end{proof}

Finally, as a consequence of this proposition we obtain the
following:

\begin{coro}
Let $(A_i,\omega_i)$ for $i=1,2$ be countable-dimensional baric
$K$-algebras with $\textrm{char}\ K=0$. Then $A_1\bowtie A_2$ is
associative if and only if
$(A_i,\omega_i)\cong(K^{\bowtie\nu_i},\textrm{id}_K^{\bowtie\nu_i})$
with $\nu_i=\textrm{dim}_K\ A_i$. In particular, $(A_1\bowtie
A_2,\omega_1\bowtie\omega_2)\cong(K^{\bowtie\nu},\textrm{id}_K^{\bowtie\nu})$
with $\nu=\nu_1+\nu_2$.
\end{coro}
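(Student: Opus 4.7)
The plan is to deduce this corollary almost mechanically from the preceding results of Section~6, with Proposition~6.1 providing the crucial link.

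For the forward implication, assume that $A_1\bowtie A_2$ is associative. Proposition~6.1 immediately gives the identity $xy=(\omega_1\bowtie\omega_2(y))\,x$ for all $x,y\in A_1\bowtie A_2$. Since each $A_i$ embeds in $A_1\bowtie A_2$ as a subalgebra via $\iota_i$ and $\omega_1\bowtie\omega_2$ restricts to $\omega_i$ there, this identity specialises to $a_ib_i=\omega_i(b_i)\,a_i$ for all $a_i,b_i\in A_i$. Each $(A_i,\omega_i)$ is then a countable-dimensional baric algebra over a field of characteristic zero satisfying precisely the hypothesis of the previous Proposition, which therefore yields $(A_i,\omega_i)\cong(K^{\bowtie\nu_i},\textrm{id}_K^{\bowtie\nu_i})$ with $\nu_i=\dim_K A_i$.

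For the converse, and simultaneously the ``in particular'' statement, I would start from the isomorphisms $(A_i,\omega_i)\cong(K^{\bowtie\nu_i},\textrm{id}_K^{\bowtie\nu_i})$ and apply Proposition~3.2 to each factor in turn, obtaining $(A_1\bowtie A_2,\omega_1\bowtie\omega_2)\cong(K^{\bowtie\nu_1}\bowtie K^{\bowtie\nu_2},\textrm{id}_K^{\bowtie\nu_1}\bowtie\textrm{id}_K^{\bowtie\nu_2})$. An iterated application of Proposition~3.1(ii) then re-brackets this as $(K^{\bowtie(\nu_1+\nu_2)},\textrm{id}_K^{\bowtie(\nu_1+\nu_2)})$, which is associative by Example~6.1.

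No step here is a serious obstacle; everything has been prepared in Sections~3 and~6. The only mild subtlety is notational: when some $\nu_i$ equals $\aleph_0$, the symbol $K^{\bowtie\nu_i}$ must be interpreted in the sense used in the proof of the previous Proposition (an iterated $\bowtie$ over a countable index set, well defined thanks to Proposition~3.1(ii)), and the re-bracketing argument must be phrased accordingly rather than as a single application of associativity. Once that convention is fixed, the whole proof consists of the two short reductions above.
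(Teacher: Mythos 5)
Your proposal is correct and follows exactly the route the paper intends: the corollary is stated there without proof as ``a consequence of this proposition,'' i.e.\ combining the associativity characterization of Proposition~6.1 (restricted to the subalgebras $\iota_i(A_i)$) with the classification in Proposition~6.3, and using Propositions~3.1(ii) and~3.2 for the converse and the ``in particular'' clause. Your remark about interpreting $K^{\bowtie\nu_i}$ for $\nu_i=\aleph_0$ is a sensible addition, but the argument is the same as the paper's.
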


\end{document}